\newcommand\rothead[1]{\rotatebox{90}{#1}}
\newcommand{\binomial}[2]{\ensuremath{\left(
\begin{array}{c} #1 \\ #2 \end{array} \right)}}
\newcommand{\boldalpha}{\boldsymbol{\alpha}}
\newcommand{\bydef}{\ensuremath{\stackrel{\mbox{\scriptsize def}}{=}}}
\newcommand{\bez}{\mathrm{B\acute ez}}
\renewcommand{\P}{{\mathbb P}}
\newcommand{\C}{\mathcal{C}}
\newcommand{\neww}[1]{}
\newtheorem{remark}{Remark}{\bf}{}
\newtheorem{theorem}{Theorem}{\bf}{}
\newtheorem{corollary}{Corollary}{\bf}{}
{\bf}{}
{\bf}{}
\newtheorem{definition}{Definition}
\begin{document}
\title{Power Flow as an Algebraic System}

\author{Jakub Marecek, Timothy McCoy, and Martin Mevissen
\thanks{J. Marecek and M. Mevissen are with IBM Research -- Ireland, 
IBM Technology Campus Damastown, Dublin D15, Ireland. 
e-mail: jakub.marecek@ie.ibm.com.
T. McCoy is with Google.
}
}

\maketitle

\begin{abstract}
Steady states of alternating-current (AC) circuits have been studied in considerable detail.
In 1982, Baillieul and Byrnes derived an upper bound on the number of steady states in a loss-less AC circuit
[IEEE TCAS, 29(11): 724--737] and conjectured that this bound holds for AC circuits in general.
We prove this is indeed the case, among other results, by studying a certain multi-homogeneous 
structure in an algebraisation. 
\end{abstract}



\section{Introduction}

For more than 60 years \cite{4074830}, steady states of alternating-current (AC) circuits have been studied in considerable detail.
The key problem, sometimes known as the power flow or load flow problem,
 considers complex voltages $V_k$ at all buses $k$ as variables, 
 except for one reference bus ($k = 0$), where the power supplied.
When one denotes complex admittance matrix $Y$, 
complex current $I_k$, and complex power $S_k$ at
bus $k$,
the steady-state equations are based on:
\begin{align}
S_k = V_k I_k^* = V_k \sum_{ l \in N} Y^*_{k,l} V_l^* = 
      \sum_{l \in N} Y^*_{k,l} V_k V_l^* 
\label{Sk}
\end{align}
where asterisk denotes complex conjugate. 
This captures the complex, non-convex non-linear nature \cite{932273,70472,260940,481635,Klos1991} of any problem in the AC model.
In order to obtain an algebraic system from \eqref{Sk}, 
one needs to reformulate the complex conjugate.
In order to do so, one may replace all $V_k^*$ with independent variables $U_k$, and filter for ``real'' solutions where $U_k=V_k^*=\Re{V_k}- \Im{V_k} \imath$ once the complex solutions are obtained.
Thereby, we obtain a particular structure, 
which allows us to prove a variety of results.

In particular, the main contributions of our paper are:
\begin{itemize}
 \item a reformulation of the steady-state equations to a multi-homogeneous algebraic system
 \item analytical results on the number and structure of feasible solutions considering losses, resolving a conjecture of Baillieul and Byrnes \cite{1085093}, which has been open for over three decades
 \item 	empirical results for some well-known instances, including the numbers of roots, conditions for non-uniqueness of optima, and tree-width.
\end{itemize}
Our analytical results rely on the work of Morgan and Sommese \cite{MORGAN-SOMMESE,Sommese2005} on multi-homogeneous structures.
Our empirical results rely on Bertini \cite{bates2013numerically}, a leading implementation of homotopy-continuation methods.
As we explain in Section \ref{relatedwork}, ours is not the first algebraisation of the system \eqref{Sk}, cf. \cite{4074830,4047153,4047382,1085093}, and there is a long history \cite{70552,230632,260891,1406189,Mehta2015a,Mehta2014,Mehta2015,6344759,7393554}
of the use of homotopy-continuation methods.  

\section{The Problem}
\label{sec:system}

In order to make the paper self-contained, we restate of the steady-state equations. 
Consider a circuit represented by an undirected graph $(N, E)$, where vertices $n \in N$ are called buses and edges $\{ l,m \} \in E \subseteq N \times N$ are called branches, 
and an admittance matrix $Y = G + B\imath \in \mathbb{C}^{|N| \times |N|}$, where the real part of an element is called conductance $G= (g_{lm})$ and the imaginary part susceptance $B=(b_{lm})$.
Each bus $k \in N$ is associated with complex voltage $V_k =\Re{V_k}+ \Im{V_k} \imath$, 
complex current $I_k
=\Re{I_k}+ \Im{I_k} \imath
$, and power $S_k = P_k + Q_k \imath$ demanded or generated.
Let $0\in N$ correspond to a reference bus
 with phase $\Im{V_0} = 0$ and magnitude $|V_0|$ fixed;
powers at all other buses are fixed too.
(In a variety of extensions, there are other buses, denoted generators, where 
voltage magnitude, but not the phase and not the power is fixed.)
Each branch $(l,m) \in E$ is associated with the complex power $S_{lm}=P_{lm}+ Q_{lm} \imath$.
The key constraint linking the buses is Kirchhoff's current law, which 
 stipulates the sum of the currents injected and withdrawn at each bus is 0.
Considering the relationship $I = YV$, 
the steady state equations hence are:
\begin{align}
P^g_k &= P_k^d+\Re{V_k} \sum_{i=1}^n ( { \Re{y_{ik}} \Re{V_i} - \Im{y_{ik}} \Im{V_i} }) \notag \\
    & \quad + \Im{V_k} \sum_{i=1}^n ({ \Im{y_{ik}} \Re{V_i} + \Re{y_{ik}} \Im{V_i} }) 
\label{eqn:Pk} 
\\
Q^g_k &= Q_k^d+\Re{V_k} \sum_{i=1}^n ({ - \Im{y_{ik}} \Re{V_i} - \Re{y_{ik}} \Im{V_i} }  ) \notag \\
    & \quad + \Im{V_k} \sum_{i=1}^n ({ \Re{y_{ik}} \Re{V_i} - \Im{y_{ik}} \Im{V_i} }) \\ 
P_{lm} &= b_{lm} ( \Re{V_l} \Im{V_m} - \Re{V_m} \Im{V_l}) 
\\
    & \quad+ g_{lm}( \Re{V_l}^2 +\Im{V_m}^2- \Im{V_l} \Im{V_m}- \Re{V_l} \Re{V_m}) \notag \\
Q_{lm}  &= b_{lm} ( \Re{V_l} \Im{V_m} - \Im{V_l} \Im{V_m}-\Re{V_l}^2 -\Im{V_l}^2) \notag \\
    & \quad + g_{lm}( \Re{V_l}\Im{V_m}- \Re{V_m} \Im{V_l}- \Re{V_m} \Im{V_l})  \notag \\
  & \quad -\frac{\bar b_{lm}}{2}(\Re{V_l}^2 +\Im{V_l}^2) 
\label{eqn:Qlm}
\end{align}


Additionally, one can optimise a variety of objectives over the steady states.
In one commonly used objective function, one approximates the costs 
of real power $P_0$ generated at the reference bus $0$
by a quadratic function $f_0$: 
\begin{align}
\label{eq:obj-costs}
\text{\bf cost} := f_0 (P_0).
\end{align}
(In a variety of extensions, in which there are other buses where the power is not fixed, 
there would be a quadratic function for each such bus and the quadratic function of power
would be summed across all of these buses.)
In the $L_p$-norm loss objective, one computes a norm of the vector $D$ obtained by summing apparent powers $S(u, v) + S(v, u) \forall (u, v) \in E$ for: 
\begin{align}
\label{eq:obj-loss}
|| D ||_p = \bigg( \sum_{(u, v) \in E} |  S(u, v) + S(v, u) |^p \bigg)^{1/p}.
\end{align}
The usual $|| D ||_1$ is denoted {\bf loss} below. 
We consider these objectives only in Section VII, while 
our results in Sections IV--VI apply independently of the use of any objective function whatsoever.

\section{Definitions from Algebraic Geometry}
\label{sec:defs}

In order to state our results, we need some definitions from algebraic geometry.
While we refer the reader to 
\cite{1085093,malajovich2007computing} for the basics, we present
the concepts introduced in the past three decades, not yet widely
covered by textbooks.
For a more comprehensive treatment, please see \cite{MORGAN-SOMMESE,Sommese2005,SHAFAREVICH}. 


Let $n \ge 0$ be an integer and $f(z)$ be a system of $n$ polynomial equations 
in $z \in \C^n$ with support $(A_1, \dots, A_n)$:
\begin{equation}
\label{eq:**}
\left\{
\begin{array}{lcl}
f_1(z) &=& \sum_{\boldalpha \in A_1} f_{1 \mathbf \boldalpha} 
z_1^{\alpha_1} z_2^{\alpha_2} \cdots z_n^{\alpha_n} \\
& \vdots & \\
f_n(z) &=& \sum_{\boldalpha \in A_n} f_{n \mathbf \boldalpha} 
z_1^{\alpha_1} z_2^{\alpha_2} \cdots z_n^{\alpha_n} \ ,\\
\end{array}
\right.
\end{equation}
where coefficients $f_{i \mathbf \boldalpha}$ are non-zero
complex numbers.
It is well known \cite{fulton1998} that the polynomials define $n$ projective hypersurfaces in a projective space 
$\C\P^n$. 

B\'ezout theorem states that either hypersurfaces intersect in an infinite set with some component of positive dimension, or the number of intersection points, counted with multiplicity, is equal to the product $d_1 \cdots d_n$, where $d_i$ is the degree of polynomial $i$.
We call the product $d_1 \cdots d_n$ the usual B\'ezout number.

The usual B\'ezout number can be improved by considering: 

\begin{definition}[Structure]
Any partition of the index set 
$\{1, \dots , n\}$ into $k$ sets 
$I_1, \dots, I_k$
defines a structure.
There, $Z_j = \{ z_i : i \in I_j \}$ is known as the group of variables
for each set $I_j$.
The associated degree $d_{ij}$ of a polynomial $f_i$ with respect to group $Z_j$ is 
\begin{align}
d_{ij} \bydef \max_{\boldalpha \in A_i} \ \sum_{l \in I_j}
\ \alpha_l. \label{eq:associateddegree}
\end{align}
We say that $f_i$ has \emph{multi-degree} $(d_{i1},\ldots,d_{in})$.
\end{definition}

Whenever for some $j$, for all $i$, the same $d_{ij}$ is attained for all $\boldalpha \in A_i$, we call the system 
(\ref{eq:**}) homogeneous in the group of variables $Z_j$.
The projective space associated to the group of variables $Z_j$ in a structure has dimension
\begin{align}
a_j \bydef \left\{
\begin{array}{ll}
|I_j| - 1& \text{if (\ref{eq:**}) is homogeneous in $Z_j$, and}
\\
|I_j| & \text{otherwise.}
\end{array}
\right.
\label{eq:asocdimension}
\end{align}

\begin{definition}[Multi-homogeneous B\'ezout Number]
Assuming $n = \sum_{j=1}^k a_j$, the multi-homogeneous B\'ezout number 
$\bez(A_1, \dots, A_n; I_1, \dots, I_k)$
is defined as the coefficient of the term
$\prod_{j=1}^k \zeta_j^{a_j}$,
where $a_j$ is the associated dimension \eqref{eq:asocdimension},
 within the polynomial $\prod_{i=1}^n \sum_{j=1}^k d_{ij} \zeta_j$, in variables $\zeta_j, j= 1 \ldots k$
 with coefficients $d_{ij}$ are the associated degrees \eqref{eq:associateddegree}; that is 
$(d_{11} \zeta_1 + d_{12} \zeta_2 + \ldots + d_{1k} \zeta_k)$
$(d_{21} \zeta_1 + d_{22} \zeta_2 + \ldots + d_{2k} \zeta_k) \cdots$
$(d_{2n} \zeta_1 + d_{2n} \zeta_2 + \ldots + d_{nk} \zeta_k)$.
\end{definition}

Consider the example of Wampler \cite{WAMPLER1992} in $x \in \C^3$:
\begin{equation}
\label{eq:**}
\left\{
\begin{array}{lcl}
p_1(z) &=&  x_1^2 + x_2 + 1,\\
p_2(z) &=&  x_1 x_3 + x_2 + 2,\\
p_3(z) &=&  x_2 x_3 + x_3 + 3,
\end{array}
\right.
\end{equation}
with the usual B\'ezout number of 8. 
Considering the partition $\{x_1, x_2\}$, $\{ x_3 \}$,
where $d_{11} = 2$, $d_{12} = 0, d_{21} = d_{22} = d_{31} = d_{32} = 1$.
the monomial
 $\zeta_1^{2} \zeta_2^{1}$
 is to be looked up in the polynomial 
$2 \zeta_1 (\zeta_1 + \zeta_2)^2$.
The corresponding multi-homogeneous B\'ezout number is hence 
4 and this is the minimum across all possible structures.  

In general, the multi-homogeneous B\'ezout number 
$\bez(A_1, \dots, A_n; I_1, \dots, I_k)$
is an upper bound on the number of
isolated roots of (\ref{eq:**}) in $\C\P^{a_1} \times \cdots \times \C\P^{a_k}$, 
and thereby an upper bounds the number of isolated finite complex roots of (\ref{eq:**}).
There are a variety of additional methods for computing the multi-homogeneous B\'ezout number, e.g., \cite{WAMPLER1992}.
In the particular case where $A = A_1 = \cdots = A_n$,
we denote  
\begin{equation}\label{eq:bezfromI}
\bez(A_1, \dots, A_n; I_1, \dots, I_k) \bydef 
\binomial {n}{a_1\ a_2\ \cdots\ a_k} 
\
\prod_{j=1}^k d_j^{a_j} \ ,
\end{equation}
where $d_j = d_{ij}$ (equal for each $i$) and the 
multinomial coefficient 
\[
\binomial {n}{a_1\ a_2\ \cdots\ a_k} 
\bydef
\frac{n!}{a_1!\ a_2!\ \cdots\ a_k!}
\]
is the coefficient of $\prod_{j=1}^k \zeta_j^{a_k}$
in $(\zeta_1 + \cdots + \zeta_k)^{n}$ with 
$n = \sum_{j=1}^k a_j$, as above. 

In summary, the multi-homogeneous B\'ezout number provides a sharper bound on the number of isolated solutions of a system of equations than the usual B\'ezout number $\prod_{i = 1}^{n} d_i = d_1 \cdots d_n$.
In the famous example of the eigenvalue problem \cite{Wampler1993}, it is known that 
the B\'ezout number is $2^n$, whereas there exists a structure with  
multi-homogeneous B\'ezout number of $n$.
We hence study the multi-homogeneous structure within the
steady state equations of alternating-current circuits.

\section{The Multi-Homogeneous Structure}
\label{sec:algebraic}

Notice that in order to obtain an algebraic system from the steady-state equations 
(\ref{eqn:Pk}--\ref{eqn:Qlm}), 
one needs to reformulate the complex conjugate.
In order to do so, one may replace all $v_n^*$ with independent variables $u_n$, and later filter for 
 solutions where $u_n=v_n^*$ once the complex solutions are obtained. We denote such solution ``real''.
Let $G$ be the set of slack generators for which $|v_n|$ is specified, and assume $0\in G$ corresponds to a reference node with phase $0$. 
Notice that the use of variables $v_n$ and $u_n$ produces a multi-homogeneous structure with variable groups $\{v_n\}$ and $\{u_n\}$:

\begin{align}
	& v_n\sum_k Y_{n,k}u_k + u_n\sum_k Y_{n,k}^*v_k = 2p_n	& n\in N \setminus G \notag \\
	& v_n\sum_k Y_{n,k}u_k - u_n\sum_k Y_{n,k}^*v_k = 2q_n	& n\in N \setminus G \notag \\
	& v_nu_n = |v_n|^2		& n\in G - \{0\} \notag \\
	& v_0 = |v_0|, \; u_0 = |v_0|	&
\label{eq:multihomo}
\end{align}

For example for the two-bus network, we obtain:

\begin{align}
	& v_1(Y_{1,0}u_0+Y_{1,1}u_1) + u_1(Y_{1,0}^*v_0+Y_{1,1}^*v_1) = 2p_1 \notag \\
	& v_1(Y_{1,0}u_0+Y_{1,1}u_1) - u_1(Y_{1,0}^*v_0+Y_{1,1}^*v_1) = 2q_1 \notag \\
	& v_0 = |v_0|, \; u_0 = |v_0|	&
\label{BertiniEqs}
\end{align}


Using the algebraic system, one can formulate a number of 
structural results concerning power flows.

\section{An Analysis for $s = 1$} 
\label{sec:properties}

For the particular multi-homogeneous structure, which is the partition of the variables into several groups in \eqref{eq:multihomo}, we can bound the number of isolated solutions:

\begin{theorem}
\label{thm:finiteRoots}
With exceptions on a parameter set of measure zero, the
alternating-current power flow \eqref{eq:multihomo} has a finite 
number of complex solutions, which is bounded above by:
\begin{align}
{2n - 2 \choose n - 1 }
\end{align}
\end{theorem}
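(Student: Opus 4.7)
The plan is to read off a bilinear multi-homogeneous structure directly from \eqref{eq:multihomo} and then compute the associated B\'ezout number from its defining generating polynomial.

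Fixing the two variable groups $Z_1 = \{v_k : k \in N \setminus \{0\}\}$ and $Z_2 = \{u_k : k \in N \setminus \{0\}\}$ and treating $v_0 = u_0 = |v_0|$ as specified constants, the system \eqref{eq:multihomo} in the square case $G = \{0\}$ underlying the ``$s = 1$'' setting of Section \ref{sec:properties} consists of $2(n-1)$ equations in the $2(n-1)$ variables of $Z_1 \cup Z_2$. A term-by-term inspection shows that every monomial in every equation is one of $v_i u_j$, $v_i$ (from $v_i u_0$), $u_j$ (from $v_0 u_j$), or a scalar; hence each equation has multi-degree exactly $(d_{i1}, d_{i2}) = (1,1)$ with respect to $(Z_1, Z_2)$. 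Because the constants $2 p_n, 2 q_n$ and the reference-bus linear terms are generically nonzero, the system is homogeneous in neither group, so \eqref{eq:asocdimension} gives associated dimensions $a_1 = a_2 = n-1$, which sum to the number $2n-2$ of equations.

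The multi-homogeneous B\'ezout number is then the coefficient of $\zeta_1^{n-1}\zeta_2^{n-1}$ in
\[
\prod_{i=1}^{2n-2}(\zeta_1 + \zeta_2) \;=\; (\zeta_1 + \zeta_2)^{2n-2},
\]
which equals $\binom{2n-2}{n-1}$; this also agrees with \eqref{eq:bezfromI} applied with $d_1 = d_2 = 1$ and the multinomial coefficient $\binom{2n-2}{n-1,\ n-1}$. Invoking the multi-homogeneous B\'ezout theorem of Morgan and Sommese \cite{MORGAN-SOMMESE,Sommese2005} then yields that the number of \emph{isolated} complex roots of \eqref{eq:multihomo} in $\C\P^{n-1}\times\C\P^{n-1}$, and hence in the affine chart carrying the physical solutions, is at most $\binom{2n-2}{n-1}$.

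The hard part is upgrading ``isolated roots'' to the full ``finite number of complex solutions'' claimed in the statement, which is exactly the purpose of the measure-zero exception. I would close this gap by observing that the parameter locus on which \eqref{eq:multihomo} acquires a positive-dimensional component is cut out by the vanishing of a multivariate resultant in $(Y, p, q, |v_0|)$; this resultant is not identically zero, since a generic square bilinear system has exactly $\bez$ simple isolated roots (a witness is produced by a small random perturbation of the admittance matrix), so the exceptional parameter locus is a proper Zariski-closed, hence Lebesgue-null, subset of the real parameter space.
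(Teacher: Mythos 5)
Your proposal is correct and follows essentially the same route as the paper: identify the bilinear $(1,1)$ multi-homogeneous structure on the groups $\{v_k\}$ and $\{u_k\}$ with $2n-2$ equations in $(n-1,n-1)$ variables, and apply the multi-homogeneous B\'ezout theorem of Morgan and Sommese to get the coefficient of $\zeta_1^{n-1}\zeta_2^{n-1}$ in $(\zeta_1+\zeta_2)^{2n-2}$, i.e.\ $\binom{2n-2}{n-1}$. Your final paragraph on why the exceptional parameter set is a proper Zariski-closed (hence measure-zero) subset is a step the paper's proof leaves implicit, and is a welcome addition --- though to be fully rigorous the witness establishing that the discriminant locus is proper must be exhibited \emph{within} the structured family of admittance matrices (fixed sparsity pattern, coupled appearance of $Y$ and $Y^*$), not just for a generic square bilinear system.
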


\begin{proof}
Each equation in the system \eqref{eq:multihomo} is linear in the $V$ variables and also in the $V^*$ variables, giving rise to a natural multi-homogeneous structure of mult-idegree $(1,1)$.  Since the slack bus voltage is fixed at a reference value, the system has $2n-2$ such equations in $(n-1, n-1)$ variables.  By the multi-homogeneous form of B\'ezout's Theorem (see e.g. Theorem 8.4.7 in \cite{Sommese2005}), the total number of solutions in multi-projective space $\C\P^{n-1}\times\C\P^{n-1}$ is precisely the stated bound, counting multiplicity.  Some subset of these lie on the affine patch $\C^{n-1}\times\C^{n-1}\subset\C\P^{n-1}\times\C\P^{n-1}$, giving the result.
\end{proof}

Notice that this applies also to some well-known instances of alternating-current
optimal power flows (ACOPF). For example, the instances of Lesieutre et al. \cite{6120344} and Bukhsh et al. \cite{6581918} have only a single ``slack'' bus, whose active and reactive powers are not fixed,
and hence the result applies.
Notice that the exception of measure-zero set is necessary; cf. Example 4.1 of \cite{1085093}. 

As we will illustrate in the next section, this bound is tight in some cases.
Deciding whether the bound on the number of roots 
obtained using a particular multi-homogeneous structure is tight for a particular
instance is, nevertheless, hard. This can be seen from:

\begin{theorem}[Theorem 1 of Malajovich and Meer \cite{malajovich2007computing}]
There does not exist a polynomial time algorithm to approximate
the minimal multi-homogeneous B\'ezout number for a polynomial system \eqref{eq:**} 
up to any fixed factor, unless P = NP.
\end{theorem}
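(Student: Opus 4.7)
The plan is to give a polynomial-time reduction from a known NP-hard combinatorial optimization problem to minimizing the multi-homogeneous B\'ezout number over partitions, and then to lift the resulting NP-hardness to hardness of approximation within \emph{any} constant factor by a product/self-composition construction. Throughout, I treat an instance of the minimization problem as a support $(A_1,\ldots,A_n)$ together with the multi-degree data $d_{ij}$ induced by each candidate partition, and the objective as the coefficient extraction in the definition of $\bez$.

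First, I would encode a hard partition-style problem (for concreteness, something like \textsc{Clique} or \textsc{Set Cover}, or a graph cut problem whose optimum is known to be NP-hard to decide exactly) into a polynomial system. Given a graph $(V,E)$, introduce a variable $z_v$ for each $v\in V$ and, for each edge $e=\{u,w\}\in E$, a polynomial equation whose support places a $1$ in the $u$ and $w$ coordinates of some $\boldalpha$, so that under any partition $I_1,\ldots,I_k$ of $V$, the multi-degree of that equation with respect to $Z_j$ is determined by whether $u,w\in I_j$. Writing out $\bez$ via the coefficient of $\prod_j\zeta_j^{a_j}$ in $\prod_i\sum_j d_{ij}\zeta_j$, the value becomes a weighted combinatorial count (a permanent-like sum over assignments of edges to parts) depending on how the partition separates the graph. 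With suitable padding equations and choice of $a_j$'s, I would arrange that the planted partition recovers exactly the combinatorial optimum and that any competing partition corresponds to a strictly worse combinatorial choice.

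Second, to upgrade NP-hardness to hardness of approximation within any fixed factor $c>1$, I would use a tensor/product construction. Given two instances with variable-disjoint supports, their union has a multi-homogeneous structure whose minimum B\'ezout number equals the product of the minima of the two factors, because any minimizing partition of the union decouples into minimizing partitions of each factor (an independent optimization lemma one would verify by examining the coefficient extraction). Iterating the construction $k$ times on an instance with a constant-factor gap $\rho>1$ between YES and NO instances produces a gap of $\rho^k$; picking $k=O(\log c/\log\rho)$ yields a gap exceeding $c$, so a $c$-approximation algorithm for the minimum multi-homogeneous B\'ezout number would decide the base NP-hard problem.

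The main obstacle, and where the bulk of the technical work lies, is step one: controlling the minimum over all exponentially many partitions, not merely at the planted one. It is straightforward to upper bound $\bez$ at the intended partition, but one must prove a matching combinatorial lower bound showing no adversarial partition can do better unless the underlying combinatorial optimum improves. This requires designing the gadget polynomials so that the coefficient extraction is strictly monotone in a combinatorial parameter of the partition (such as the number of monochromatic edges or the cost of a cover), typically by adding carefully chosen ``tie-breaking'' equations whose multi-degrees penalize spurious partitions. Once this correspondence is established, the product construction in step two is routine and the inapproximability conclusion follows.
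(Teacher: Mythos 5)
This statement is quoted by the paper from Malajovich and Meer \cite{malajovich2007computing}; the paper gives no proof of its own, so your proposal can only be judged on its internal merits. As written it is a plan rather than a proof, and it has two genuine gaps. First, you explicitly defer the core difficulty: lower-bounding $\bez$ over all exponentially many partitions so that no adversarial grouping beats the planted one unless the underlying combinatorial optimum improves. That is not a technicality to be filled in later; it is the entire content of the hardness result, and without a concrete gadget (its supports, its padding equations, and a verification that the coefficient extraction in the definition of $\bez$ is monotone in the relevant graph parameter) nothing has been established.

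Second, and more structurally, the gap-amplification step does not work from the premises you set up. Step one, even if completed, yields only NP-hardness of \emph{exact} minimization, i.e., hardness of distinguishing $\min \bez \le m$ from $\min \bez \ge m+1$. Since multi-homogeneous B\'ezout numbers are typically exponentially large integers, this is a multiplicative gap of only $1 + 1/m = 1 + 2^{-\Omega(n)}$, so the number of self-compositions needed to exceed a fixed factor $c$ is $k = O(\log c/\log\rho) = O(m\log c)$, which is superpolynomial in the input size; the reduction ceases to be polynomial. To make the argument go through you must engineer a constant (indeed, arbitrarily large) multiplicative separation directly in the base reduction --- which is essentially how Malajovich and Meer proceed, exploiting the fact that the multinomial coefficients in \eqref{eq:bezfromI} jump by large factors when the partition changes. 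Separately, your product lemma (that the minimal B\'ezout number of a variable-disjoint union factors as the product of the minima) is asserted without handling partitions whose parts mix variables from the two factors; the associated dimensions $a_j$ and the coefficient extraction do not obviously decouple for such partitions, so this too requires proof.
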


We can, however, show there exists a certain structure among these solutions:

\begin{corollary}
\label{thm:evenRoots}
If there exists a feasible solution of the alternating-current power flow, then the solution has even multiplicity greater or equal to $2$
or another solution exists.
\end{corollary}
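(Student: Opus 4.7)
The plan is to combine a symmetry of \eqref{eq:multihomo} with the parity of the B\'ezout bound from Theorem \ref{thm:finiteRoots}. First I would note that $\binom{2n-2}{n-1}$ is the central binomial coefficient $\binom{2m}{m}$ for $m=n-1\geq 1$, and the identity $\binom{2m}{m}=2\binom{2m-1}{m-1}$ shows it is always even. Hence, generically, the total multi-projective multiplicity of complex solutions of \eqref{eq:multihomo} is an even integer.

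Next I would introduce the antiholomorphic involution $\sigma\colon(v,u)\mapsto(\overline{u},\overline{v})$ on $\C\P^{n-1}\times\C\P^{n-1}$. A short calculation verifies that $\sigma$ preserves the solution set: conjugating each equation of \eqref{eq:multihomo} swaps $v$ and $u$ and replaces $Y$ by $Y^{*}$, while the right-hand sides $p_n,q_n,|v_0|^{2}$ are real, so the conjugated system coincides with the original after this relabelling. Because $\sigma$ is a real-analytic diffeomorphism of the ambient variety, it preserves the local multiplicity at each isolated solution. The physically feasible (``real'') solutions are precisely the fixed points of $\sigma$, while every non-real complex solution $p$ lies in a free orbit $\{p,\sigma(p)\}$ whose two members have identical multiplicity.

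Separating the fixed points from the orbits of length two and summing multiplicities yields
\begin{equation*}
\sum_{\text{feasible }p}\mathrm{mult}(p)\ +\ 2\sum_{\text{non-real orbit reps}}\mathrm{mult}(p)\ =\ \binom{2n-2}{n-1}.
\end{equation*}
Reducing modulo $2$ shows that the total multiplicity of feasible solutions is even. Consequently, if a feasible solution exists and no other feasible solution does, the lone solution must have even multiplicity, which is therefore at least $2$; otherwise a further feasible solution is forced to exist, which is exactly the dichotomy claimed in the corollary.

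The main obstacle I anticipate is the bookkeeping around the multi-projective-versus-affine distinction in Theorem \ref{thm:finiteRoots} and its measure-zero parameter exception. The exception is harmless because in that degenerate regime infinitely many solutions already exist. For the projective/affine issue, since $\sigma$ acts on $\C\P^{n-1}\times\C\P^{n-1}$ it also partitions solutions at infinity into fixed points and pairs, so the parity argument holds globally in multi-projective space; the feasible solutions are automatically on the affine patch where $v_0,u_0\neq 0$, so the even-parity conclusion transfers after checking that any $\sigma$-fixed points at infinity contribute an even multiplicity mass of their own (a small case analysis using the support of \eqref{eq:multihomo}).
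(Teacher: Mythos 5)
Your proof follows essentially the same route as the paper's: evenness of the central binomial coefficient from Theorem \ref{thm:finiteRoots} combined with the conjugation involution $(v,u)\mapsto(\overline{u},\overline{v})$, which fixes the physically ``real'' solutions and pairs up the rest, so that the real solutions' total multiplicity is even. You are in fact somewhat more careful than the paper, which silently transfers the parity of the multi-projective count to the affine count without addressing possible solutions at infinity --- the point you flag (but also do not fully resolve) at the end.
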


\begin{proof}
The finite number of solutions to the power flows problem of Theorem~\ref{thm:finiteRoots} is even.  Observe that $(U, \hat{U})$ is a solution of the system \eqref{eq:multihomo} if and only if $(\hat{U}^*, U^*)$ is a solution. This implies that the non-``real'' solutions, that is solutions for which $U^*\neq\hat{U}$, necessarily come in pairs.  It follows that ``real'' feasible solutions are also even in number, counting multiplicity.  The result follows.
\end{proof}

Note that a solution having multiplicity greater than 1 is a special case that is highly unlikely in a real system. Moreover, it is easily detected, since the Jacobian at a solution is nonsingular if and only if the solution has multiplicity 1.

\section{Alternating-Current Optimal Power Flow} 

One may also make the following observations about the alternating-current optimal power flows, i.e., the problem of optimising an objective over the steady state:

\begin{remark}
\label{thm:asymptotically-convergent}
For the alternating-current power flows, where powers are fixed at all but the reference bus,  
whenever there exists a real feasible solution,
except for a parameter set of measure zero,
one can enumerate all feasible solutions in finite time.
\end{remark}

Indeed: By Theorem~\ref{thm:finiteRoots}, we know there exist a finite number of isolated solutions to the system \eqref{eq:multihomo}.
By the homotopy-continuation method of Sommese et al. \cite{Sommese2005,bates2013numerically}, we can enumerate the roots with probability 1, which allows us to pick the global optimum, trivially.
Notice that Bertini, the implementation of the method of Sommese \cite{bates2013numerically}, makes it possible to check that all roots are obtained.
Notice that the addition of inequalities 
 can be accommodated
by filtering the real roots.

Nevertheless, this method is not practical, as there may be too many isolated solutions to enumerate. 
Generically, this is indeed the case, 
whenever there are two or more generators 
with variable output, i.e., buses, whose 
active and reactive power is not fixed:

\begin{corollary}
\label{thm:positiveDimensional}
In the alternating-current optimal power flow problem, i.e., with $s > 1$, where powers are variable outside of the reference bus and there are no additional inequalities,  the 
complex solution set is empty or positive-dimensional, except for a parameter set of measure zero.
When the complex solution set is positive-dimensional, if a smooth real feasible solution exists, then there are infinitely many real feasible solutions.
\end{corollary}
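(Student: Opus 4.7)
The plan is to establish the two assertions in turn. First, a generic dimension count shows that when $s > 1$ the complex variety of \eqref{eq:multihomo} has positive dimension wherever it is non-empty; second, an anti-holomorphic involution argument upgrades a single smooth real solution to an uncountable family of real solutions.

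For the complex part, I would re-examine the system \eqref{eq:multihomo} under the hypothesis that $s > 1$ buses (including the reference) have unfixed active and reactive power. Each variable-output bus beyond the reference removes its pair of $p_n$ and $q_n$ equations from the system while leaving the $2n - 2$ affine unknowns $v_1, \dots, v_{n-1}, u_1, \dots, u_{n-1}$ intact. Thus the reduced system has at most $2n - s - 1$ complex equations in $2n - 2$ unknowns, so the expected dimension of the solution variety in $\C\P^{n-1} \times \C\P^{n-1}$ is at least $s - 1 \geq 1$. To make ``generically'' rigorous, I would invoke a parametric transversality / Bertini-smoothness statement in the multi-projective setting (drawn from \cite{Sommese2005}): regarding the admittances together with the specified powers and voltage magnitudes as parameters, one shows that outside a proper Zariski-closed, hence measure-zero, subset of parameter space the variety is either empty or equidimensional of the expected positive dimension.

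For the real part, suppose $p^0 = (V^0, \overline{V^0})$ is a smooth real feasible solution, so that the complex solution set $X$ is a complex manifold of some positive dimension $d \geq 1$ in a neighbourhood of $p^0$. As already used in the proof of Corollary~\ref{thm:evenRoots}, the map $\sigma(V, U) := (\overline{U}, \overline{V})$ is an involution carrying $X$ to itself, and its fixed set is exactly the set of ``real'' solutions. Since $\sigma$ is anti-holomorphic and $p^0$ is smooth, the differential $d\sigma_{p^0}$ is an anti-linear involution of the complex tangent space $T_{p^0} X$, whose $+1$-eigenspace is a real subspace of real dimension $d$. An equivariant implicit function theorem in the real-analytic category, obtained by choosing local coordinates on $X$ around $p^0$ in which $\sigma$ acts by coordinate-wise complex conjugation, then identifies the fixed locus $X^\sigma$ near $p^0$ with a real-analytic submanifold of real dimension $d \geq 1$, which contains uncountably many real feasible solutions.

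The main obstacle I anticipate is the parametric dimension step: one has to verify that the locus of parameters at which the variety is non-empty but of dimension strictly above the expected value $s - 1$ is measure-zero. I would handle this by writing down the incidence variety between solutions and parameters, stratifying by local dimension of the fiber, and applying a Sard-type argument to the projection to parameter space, together with the observation that the defining equations of \eqref{eq:multihomo} depend algebraically on the parameters. The second step is essentially formal once smoothness is assumed, but some care is required because $\sigma$ is anti-holomorphic rather than holomorphic, so one must work in real-analytic rather than complex-analytic coordinates, and one must check that the ``real'' solutions produced this way are genuinely distinct feasible solutions of the original steady-state equations, which follows immediately from $u_n = \overline{v_n}$ on $X^\sigma$.
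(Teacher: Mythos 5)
Your proposal follows essentially the same route as the paper: a variables-versus-equations count combined with the dimension theorem (the paper cites Lemma 13.4.1 of \cite{Sommese2005}) to get complex dimension at least $s-1\ge 1$, and then the equality of local real and complex dimensions at a smooth real point to conclude there are infinitely many real solutions. The only substantive difference is that you supply details the paper leaves implicit --- the Sard-type genericity argument and, for the second step, the anti-holomorphic involution $\sigma(V,U)=(\overline{U},\overline{V})$ whose fixed (``real'') locus near a smooth point is a real-analytic manifold of real dimension equal to the local complex dimension, a fact the paper simply asserts.
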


\begin{proof}
For each slack bus after the first, the system has two variables but only one equation.  With $s+1$ slack buses, the rank of the Jacobian and hence the dimension of the complex solution set will be at least $s$ by Lemma 13.4.1 in \cite{Sommese2005}.
Furthermore, if a real feasible solution $U$ exists and the solution set is smooth at this point, then the local dimension of the complex and real solution sets are equal at $U$.  Therefore, since the complex solution set is positive-dimensional, so is the real set at $U$, and so infinitely many real feasible solutions exist.
\end{proof}

Although there are a variety of methods for studying positive-dimensional systems, including the enumeration of a point within each connected component \cite{basu1998new,ROUILLIER2000} and studying the critical points of the restriction to the variety of the distance function to such points \cite{AUBRY2002}, we suggest the method of moments \cite{Lasserre2006,7024950} may be more suitable for studying the feasible set of alternating-current optimal power flows.
It has been shown recently \cite{7024950} that it allows for very small errors on systems in dimension of over five thousand.

As is often the case in engineering applications, one may also be interested in the distance of a point
to the set of feasible solutions. 
Again, by considering our algebraisation, one can bound the probability this distance is small
using the theorem of Lotz \cite{Lotz2013} on the zero-set $V$ of multivariate polynomials.
This could be seen as the converse of the results of \cite{596944}.




\begin{landscape}

\begin{table*}
\vspace{0.05in}
    \centering
    \begin{tabular}{r|rrrrrrrrrrrr}
             & \multicolumn{12}{l}{\textbf{No. $|N|$ of buses}} \\  
\textbf{Method} &  3 &  4 &   5 &    6 &    7 &     8 &     9 &     10 &      11 &      12 &       13 &       14          \\ 
             \midrule
    	                             	                         B\'ezout's upper bound & 16 & 64 & 256 & 1024 & 4096 & 16384 & 65536 & 262144 & 1048576 & 4194304 & 16777216 & 67108864 \\
    	        A BKK-based upper bound &  8 & 40 & 192 &  864 & 3712 & 15488 & 63488 & 257536 & 1038336 & 4171776 & 16728064 & 67002368 \\
    	                         Theorem \ref{thm:finiteRoots} &  6 & 20 &  70 &  252 &  924 &  3432 & 12870 &  48620 &  184756 &  705432 &  2704156 & 10400600 \\
    	               \midrule
    	                   	                   	               Generic lower bound &  6 & 20 &  70 &  252 &  924 &  3432 & 12870 &  48620 &  184756 &  705432 &  2704156 & 10400600 \\
\bottomrule
    \end{tabular}
    \caption{The maximum number of steady states in a circuit with a fixed number of buses.}
    \label{tab:complete}
\end{table*}

\begin{table*}[ht!]
\begin{tabular}
{ll|rrrrrrrrrrrr}
{ \bf Instance} & { \bf Source} & 
{ \bf \rothead{No. $|N|$ of buses}} & { \bf \rothead{No. $|E|$ of branches}} & { \bf \rothead{Treewidth tw$(P)$}} &
{ \bf \rothead{No. $|X|$ of solutions}} & { \bf \rothead{min$_{x \in X}$(cost($x$))}} & { \bf \rothead{No. of min. wrt. cost}} & { \bf \rothead{avg$_{x \in X}$(cost($x$))}} & { \bf \rothead{max$_{x \in X}$(cost($x$))}} & { \bf \rothead{min$_{x \in X}$(loss($x$))}} & { \bf \rothead{No. of min. wrt. loss}} & { \bf \rothead{avg$_{x \in X}$(loss($x$))}} & { \bf \rothead{max$_{x \in X}$(loss($x$))}}\\ 
             \midrule
case2w & Bukhsh et al. \cite{6581918} & 2 & 1 & 1 & 2 & 8.42 & 1 & 9.04 & 9.66 & 0.71 & 1 & 1.02 & 1.33\\ 
case3KW & Klos and Wojcicka \cite{Klos1991} & 3 & 3 & 2 & 6 & -0.0 & 1 & 1250.0 & 1500.0 & -0.0 & 1 & -0.0 & 0.0\\ 
case3LL & Lavaei and Low \cite{5971792} & 3 & 3 & 2 & 2 & 1502.07 & 1 & 1502.19 & 1502.31 & 0.22 & 1 & 0.34 & 0.46\\ 
case3w & Bukhsh et al. \cite{6581918} & 3 & 2 & 1 & 2 & 5.88 & 1 & 5.94 & 6.01 & 0.55 & 1 & 0.58 & 0.61\\ 
case4 & McCoy et al. & 4 & 3 & 1 & 4 & 1502.51 & 1 & 1505.19 & 1507.88 & 0.11 & 1 & 2.79 & 5.48\\ 
case4ac & McCoy et al. & 4 & 4 & 2 & 6 & 2005.25 & 2 & 3337.5 & 4005.51 & 0.01 & 1 & 2.44 & 3.78\\ 
case4cyc & Bukhsh et al. \cite{6581918} & 4 & 4 & 2 & 4 & 3001.61 & 1 & 3003.56 & 3005.52 & 0.01 & 1 & 1.96 & 3.92\\ 
case4gs &  Grainger and Stevenson \cite{5491276} & 4 & 4 & 2 & 10 & 2316.37 & 1 & 3681.07 & 4595.8 & 0.37 & 1 & 27.24 & 39.72\\ 
case5w & Lesieutre et al. \cite{6120344} & 5 & 6 & 2 & 2 & 2003.57 & 1 & 2004.6 & 2005.63 & 0.47 & 1 & 1.5 & 2.53\\ 
case6ac & McCoy et al. & 6 & 6 & 2 & 23 & 4005.44 & 1 & 5574.23 & 6013.26 & 0.02 & 1 & 6.26 & 10.51\\ 
case6ac2 & McCoy et al. & 6 & 6 & 2 & 22 & 4005.32 & 1 & 5554.03 & 6012.82 & 0.02 & 1 & 5.97 & 10.21\\ 
case6b & McCoy et al. & 6 & 6 & 2 & 30 & 3005.7 & 5 & 3606.3 & 4508.28 & 0.01 & 1 & 3.9 & 5.88\\ 
case6cyc & Bukhsh et al. \cite{6581918} & 6 & 6 & 2 & 30 & 3005.7 & 4 & 3606.3 & 4508.28 & 0.01 & 1 & 3.9 & 5.88\\ 
case6cyc2 & McCoy et al. & 6 & 6 & 2 & 12 & 1506.87 & 1 & 3131.55 & 4508.24 & 0.03 & 1 & 4.15 & 6.56\\ 
case6cyc3 & McCoy et al. & 6 & 6 & 2 & 12 & 4502.42 & 1 & 4505.95 & 4508.01 & 0.02 & 1 & 3.55 & 5.61\\ 
case7 & McCoy et al. & 7 & 7 & 2 & 2 & 1667.31 & 1 & 1668.36 & 1669.41 & 0.08 & 1 & 0.26 & 0.45\\ 
case8cyc & Bukhsh et al. \cite{6581918} & 8 & 8 & 2 & 60 & 6506.11 & 1 & 8557.72 & 9511.04 & 0.01 & 1 & 4.52 & 7.84\\ 
case9 & Chow \cite{5491276} & 9 & 9 & 2 & 16 & 3504.44 & 1 & 5692.02 & 6505.02 & 0.03 & 1 & 1.37 & 1.87\\ 
case9g & McCoy et al. & 9 & 9 & 2 & 2 & 2604.41 & 1 & 4103.36 & 5602.31 & 0.08 & 1 & 0.26 & 0.45\\ 
caseK4 & McCoy et al. & 4 & 6 & 3 & 8 & 2008.62 & 1 & 3006.32 & 4005.51 & 0.01 & 1 & 4.6 & 7.03\\ 
caseK4sym & McCoy et al. & 4 & 6 & 3 & 6 & 2009.28 & 2 & 3339.29 & 4005.91 & 0.01 & 1 & 3.96 & 7.28\\ 
caseK6 & McCoy et al. & 6 & 15 & 5 & 36 & 2018.2 & 1 & 5075.47 & 6030.0 & 0.02 & 1 & 17.16 & 27.25\\ 
caseK6b & McCoy et al. & 6 & 15 & 5 & 40 & 6002.79 & 1 & 6017.01 & 6019.41 & 0.01 & 1 & 14.23 & 16.62\\ 
caseK6sym & McCoy et al. & 6 & 15 & 5 & 48 & 6003.01 & 1 & 6017.75 & 6019.86 & 0.01 & 1 & 14.75 & 16.86\\
\bottomrule
\end{tabular}
\caption{Properties of the instances tested.}
\label{tab:instances}
\end{table*}
\end{landscape}

\section{Computational Illustrations}
\label{sec:computational}

In order to illustrate Theorem~\ref{thm:finiteRoots}, 
we first present the 
maximum number of steady states in a circuit with a fixed number of buses
in Table~\label{tab:complete},
and compare it to the values of our upper bound,
B\'ezout-based upper bound,
and the BKK-based upper bound \cite{Chen2015,Mehta2015a}.
Notice that the 
maximum number of steady states in a circuit with a fixed number of buses
is achieved when $(N, E)$ is a clique.
Notice further that the generic lower bound, obtained as the number of solutions found by tracing the paths, matches the upper bound of Theorem~\ref{thm:finiteRoots} throughout Table~\ref{tab:complete}.

\begin{figure*}
\begin{verbatim}
variable_group V0, V1; variable_group U0, U1;
I0 = V0*Yv0_0 + V1*Yv0_1; I1 = V0*Yv1_0 + V1*Yv1_1;
J0 = U0*Yu0_0 + U1*Yu0_1; J1 = U0*Yu1_0 + U1*Yu1_1;
fv0 = V0 - 1.0; fv1 = I1*U1 + J1*V1 + 7.0;
fu0 = U0 - 1.0; fu1 = -I1*U1 + J1*V1 - 7.0*I;
\end{verbatim}
\caption{A Bertini encoding of ACPF on the two-bus instance of Bukhsh et al. 2012,
where the impedance of a single branch is $0.04 + 0.2i$.}
\label{fig:Bertini}
\end{figure*}

To illustrate Proposition~
\ref{thm:asymptotically-convergent},
we have enumerated the steady states using Bertini, a versatile package for homotopy-continuation
 methods by Sommese et al. \cite{Sommese2005}. See Figure~\ref{fig:Bertini}
for an example of Bertini input corresponding to the example above \eqref{BertiniEqs},
with constants $\mathtt{Yu\_i\_j}$ representing $Y_{i,j}$ and $\mathtt{Yv\_i\_j}$ representing $Y^*_{i,j}$.
The results are summarised in Table~\label{tab:instances}.

To illustrate Theorem~\ref{thm:finiteRoots} further, we present
the values of our upper bound on a collection of instances widely known in the power systems community.  
The instances are mostly available from the Test Case Archive of Optimal Power Flow (OPF) Problems with Local Optima
of Bukhsh\footnote{\url{http://www.maths.ed.ac.uk/OptEnergy/LocalOpt/}, accessed November 30th, 2014.},
while some have appeared in well-known papers, e.g. \cite{Klos1991},
and some are available in recent distributions of Matpower \cite{5491276}, a well-known benchmark.
In particular, we present the numbers of 
distinct roots of the instances. 
In all cases, where Theorem~\ref{thm:finiteRoots} applies, the number of solutions found 
by tracing the paths matches the upper bound of Theorem~\ref{thm:finiteRoots}, certifying
the completeness. In other cases, one could rely on Bertini certificates of completeness of the search.

Empirically, we observe there exists a unique global optimum in all these instances tested
with respect to the $L_1$-loss objective.
For the generation cost objective, however, there are a number of instances (case4ac,
caseK4sym, case6b, case6cyc), where the global optima are not unique. The case of caseK4sym
is a particularly good illustration, where the symmetry between two generators and
two demand nodes in a complete graph results in multiple global optima.

In order to provide material for further study of structural properties, we present tree-width
of the instances in Table~\ref{tab:instances} in column tw$(N)$.
Notice that for the well-known small instances, tree-width is 1 or 2, e.g., 1 for the instance in Figure~\ref{fig:Bertini}, and 2 for the instance of Lesieutre et al.
As the instances grow, however, this need not be the case: 
Kloks \cite{Kloks1994} shows treewidth is not bounded even in sparse random graphs, with high probability. 
In complete graphs, such as caseK4sym with tree-width 3 above, 
tree-width grows linearly in the number of vertices.

\section{Related Work}
\label{relatedwork}

There is a long history of study of the number and structure of solutions of power flows
\cite{4074830,4047153,4047382,1085093}.
\cite{4074830} considered the B\'ezout bound.
\cite{Chen2015,Mehta2015a} considered a bound based on the work of Bernstein \cite{BERNSTEIN} and Kushnirenko \cite{KUSHNIRENKO}.
\cite{1085093,4047153} derived the same expression as in Theorem 1 using intersection theory, but in the lossless AC model.
They highlight that the number of solutions in an alternating-current model with losses is an important open problem.
We note that Theorem 1 subsumes Theorem 4.1 of \cite{1085093} as a special case. 
Finally, we note that \cite{Klos1991} present a lower bound without a proof and recent papers \cite{6344759,7470293} bound certain distinguished solutions, but not all solutions; cf. \cite{Mehta2015b}.

There is also a long history of applications of homotopy-continuation methods in power systems \cite{70552,230632,260891,1406189},
although often, e.g. in \cite{260891}, the set-up of the homotopy restricted the methods to a heuristic, which could not enumerate all the solutions of the power flow \cite{6225409}.
Recently, these have attracted much interest \cite{Mehta2015a,Mehta2014,Mehta2015} 
following the work of Trias \cite{6344759,PatentTrias,7393554}.  
See \cite{Mehta2015b} for an overview.

\section{Conclusions}
\label{sec:conclusion}

We hope that the structural results provided will aid the development
of faster solvers for the related non-linear problems \cite{481635}. Arguably,
one could:
\begin{itemize}
\item By using Theorem \ref{thm:finiteRoots} in the construction of start systems for homotopy-continuation methods \cite{Wampler1993},
      allow for larger zero-dimensional systems to be studied.
\item Extend Corollary \ref{thm:positiveDimensional} to finding at least one point in each connected component \cite{ROUILLIER2000,AUBRY2002}.
\item Extend the homotopy-continuation methods to consider inequalities
within the tracing, rather than only in the filtering phase,
which could improve their computational performance considerably.
\item Develop methods for the optimal power flow problem, whose complexity
would be superpolynomial only in the tree-width and the number of buses.
\end{itemize}
The latter two may be some of the most important challenges within the analysis of circuits and systems.

\newpage
\paragraph*{Acknowledgements}
Parts of this work have been done while Tim was visiting IBM Research.
Jakub would like to thank Isaac Newton Institute for Mathematical Sciences at the University of Cambridge for their generous support for his visits.
Dhagash Mehta has kindly provided a variety of suggestions as the related work.

\bibliographystyle{abbrv}
\bibliography{tim-references}

\end{document}